\definecolor{webgreen}{rgb}{0,.5,0}
\definecolor{webbrown}{rgb}{.6,0,0}
\newcommand{\seqnum}[1]{\href{https://oeis.org/#1}{\rm \underline{#1}}}
\begin{document}

\theoremstyle{plain}
\newtheorem{theorem}{Theorem}
\newtheorem{corollary}[theorem]{Corollary}
\newtheorem{lemma}{Lemma}
\newtheorem{example}{Example}

\begin{center}
\vskip 1cm{\LARGE\bf 
Binomial Fibonacci Power Sums\\
}
\vskip 1cm
\large
Kunle Adegoke \\
Department of Physics and Engineering Physics\\
Obafemi Awolowo University\\
220005 Ile-Ife, Nigeria\\
\href{mailto:adegoke00@gmail.com}{\tt adegoke00@gmail.com} \\
\end{center}

\vskip .2 in

\noindent 2010 {\it Mathematics Subject Classification}:
Primary 11B39; Secondary 11B37.

\noindent \emph{Keywords: }
Fibonacci number, Lucas number, summation identity, series, binomial coefficient.

\begin{abstract}
\noindent We evaluate various binomial sums involving the powers of Fibonacci and Lucas \mbox{numbers}.
\end{abstract}

\section{Introduction}
Our main goal in this paper is to evaluate the following sums of powers of Fibonacci and Lucas numbers involving the binomial coefficients:
\[
\sum_{k = 0}^n {( \pm 1)^k \binom nkF_{j(rk + s)}^{2m} } ,\quad\sum_{k = 0}^n {( \pm 1)^k \binom nkL_{j(rk + s)}^{2m} }, 
\]

\[
\sum_{k = 0}^n {( \pm 1)^k \binom nkF_{j(2rk + s)}^{2m + 1} } ,\quad\sum_{k = 0}^n {( \pm 1)^k \binom nkL_{j(2rk + s)}^{2m + 1} }; 
\]
thereby extending the work of Wessner~\cite{wessner66}, Hoggatt and Bicknell~\cite{hoggatt64a,hoggatt64b}, Long~\cite{long90}, Kili\c{c} et al~\cite{kilic15} and several previous researchers. Here $n$ is any non-negative integer, $j$, $m$, $r$ and $s$ are any integers and $F_t$ and $L_t$ are the Fibonacci and Lucas numbers.

There is a dearth of binomial cubic Fibonacci and Lucas identities in existing literature. We will show that, for any non-negative integer $n$ and any integer $s$,
\[
\sum_{k = 0}^n {\binom nkF_{k + s}^3 }  = \frac{1}{5}(2^n F_{2n + 3s}  + 3F_{n - s} ),
\]

\[
\sum_{k = 0}^n {\binom nkL_{k + s}^3 }  = 2^n L_{2n + 3s}  + 3L_{n - s},
\]

\[
\sum_{k = 0}^n {\binom nk(-1)^kF_{k + s}^3 }  = \frac{1}{5}((-1)^n2^n F_{n + 3s}  - (-1)^s3F_{2n + s} ),
\]

\[
\sum_{k = 0}^n {(-1)^k\binom nkL_{k + s}^3 }  = (-1)^n2^n L_{n + 3s}  + (-1)^s3L_{2n + s},
\]

\[
\sum_{k = 0}^n {\binom nk2^k F_{k + s}^3 }  = \left\{ \begin{array}{l}
 5^{n/2 - 1} (F_{3n + 3s}  - ( - 1)^s 3F_s ),\quad\mbox{$n$ even}; \\ 
 5^{(n - 3)/2} (L_{3n + 3s}  + ( - 1)^s 3L_s )\quad\mbox{$n$ odd}, \\ 
 \end{array} \right.
\]

\[
\sum_{k = 0}^n {\binom nk2^k L_{k + s}^3 }  = \left\{ \begin{array}{l}
 5^{n/2} (L_{3n + 3s}  + ( - 1)^s 3L_s ),\quad\mbox{$n$ even}; \\ 
 5^{(n + 1)/2} (F_{3n + 3s}  - ( - 1)^s 3F_s )\quad\mbox{$n$ odd}. \\ 
 \end{array} \right.
\]
We will also derive the following binomial summation identities which we believe are new:
\[
\sum_{k = 0}^n {( - 1)^k \binom nkF_{2jr + p}^{n - k} F_p^k F_{j(rk + s)}^2 }  = \frac15(F_{2jr}^n L_{pn - 2js}  - ( - 1)^{js} 2F_{jr}^n L_{jr + p}^n), 
\]

\[
\sum_{k = 0}^n {( - 1)^k \binom nkF_{2jr + p}^{n - k} F_p^k L_{j(rk + s)}^2 }  = F_{2jr}^n L_{pn - 2js}  + ( - 1)^{js} 2F_{jr}^n L_{jr + p}^n, 
\]

\[
\sum_{k = 0}^n {( - 1)^k \binom nkL_{2jr + p}^{n - k} L_p^k F_{j(rk + s)}^2 }  = \left\{ \begin{array}{l}
 5^{n/2 - 1} F_{2jr}^n L_{pn - 2js}  - ( - 1)^{js} 5^{n - 1} 2F_{jr}^n F_{jr + p}^n,\quad\mbox{$n$ even};  \\ 
 5^{(n - 1)/2} F_{2jr}^n F_{pn - 2js}  - ( - 1)^{js} 5^{n - 1} 2F_{jr}^n F_{jr + p}^n,\quad\mbox{$n$ odd},  \\ 
 \end{array} \right.
\]
and
\[
\sum_{k = 0}^n {( - 1)^k \binom nkL_{2jr + p}^{n - k} L_p^k L_{j(rk + s)}^2 }  = \left\{ \begin{array}{l}
 5^{n/2} F_{2jr}^n L_{pn - 2js}  + ( - 1)^{js} 5^{n} 2F_{jr}^n F_{jr + p}^n,\quad\mbox{$n$ even};  \\ 
 5^{(n + 1)/2} F_{2jr}^n F_{pn - 2js}  + ( - 1)^{js} 5^{n} 2F_{jr}^n F_{jr + p}^n,\quad\mbox{$n$ odd}.  \\ 
 \end{array} \right.
\]
The Fibonacci numbers, $F_n$, and the Lucas numbers, $L_n$, are defined, for \text{$n\in\mathbb Z$}, through the recurrence relations 
\begin{equation}\label{eq.s6z1bcx}
F_n=F_{n-1}+F_{n-2}, \text{($n\ge 2$)},\quad\text{$F_0=0$, $F_1=1$};
\end{equation}
and
\begin{equation}
L_n=L_{n-1}+L_{n-2}, \text{($n\ge 2$)},\quad\text{$L_0=2$, $L_1=1$};
\end{equation}
with
\begin{equation}
F_{-n}=(-1)^{n-1}F_n,\quad L_{-n}=(-1)^nL_n.
\end{equation}
Throughout this paper, we denote the golden ratio, $(1+\sqrt 5)/2$, by $\alpha$ and write $\beta=(1-\sqrt 5)/2=-1/\alpha$, so that $\alpha\beta=-1$ and $\alpha+\beta=1$. 

Explicit formulas (Binet formulas) for the Fibonacci and Lucas numbers are
\begin{equation}
F_n  = \frac{{\alpha ^n  - \beta ^n }}{{\alpha  - \beta }},\quad L_n  = \alpha ^n  + \beta ^n,\quad n\in\mathbb Z.
\end{equation}
Koshy \cite{koshy} and Vajda \cite{vajda} have written excellent books dealing with Fibonacci and Lucas numbers.

Our results emanate from the following general Fibonacci and Lucas summation identities (Lemma~\ref{lem.o33oai3}):
\[\tag{BF}
\sum_{k = 0}^n {\binom nkx^{n - k} z^k F_{j(rk + s)}^m }= \frac{1}{{(\sqrt 5 )^m }}\sum_{i = 0}^m {( - 1)^{i(js + 1)} \binom mi\alpha ^{(m - 2i)js} \left( {x + ( - 1)^{ijr} \alpha ^{(m - 2i)jr} z} \right)^n }, 
\]

\[\tag{BL}
\sum_{k = 0}^n {\binom nkx^{n - k} z^k L_{j(rk + s)}^m }= \sum_{i = 0}^m {( - 1)^{ijs} \binom mi\alpha ^{(m - 2i)js} \left( {x + ( - 1)^{ijr} \alpha ^{(m - 2i)jr} z} \right)^n }.
\]
For low $m$, the identities (BF) and (BL) are more useful in the equivalent form
\[\tag{BF$^{'}$}
\sum_{k = 0}^n {\binom nkx^{n - k} z^k F_{j(rk + s)}^m }  = \frac{1}{{(\sqrt 5 )^m }}\sum_{i = 0}^m {( - 1)^i \binom mi\beta ^{ijs} \alpha ^{(m - i)js} \left( {x + \beta ^{ijr} \alpha ^{(m - i)jr} z} \right)^n },
\]

\[\tag{BL$^{'}$}
\sum_{k = 0}^n {\binom nkx^{n - k} z^k L_{j(rk + s)}^m }  = \sum_{i = 0}^m {\binom mi\beta ^{ijs} \alpha ^{(m - i)js} \left( {x + \beta ^{ijr} \alpha ^{(m - i)jr} z} \right)^n }.
\]
When $m=1$, we have the weighted linear binomial identities:
\[\tag{F1}
\sum_{k = 0}^n {\binom nkx^{n - k} z^k F_{j(rk + s)} }  = \frac{1}{{\sqrt 5 }}\left( {\alpha ^{js} (x + \alpha ^{jr} z)^n  - \beta ^{js} (x + \beta ^{jr} z)^n } \right),
\]

\[\tag{L1}
\sum_{k = 0}^n {\binom nkx^{n - k} z^k L_{j(rk + s)} }  = \alpha ^{js} (x + \alpha ^{jr} z)^n  + \beta ^{js} (x + \beta ^{jr} z)^n,
\]
which are valid for $n$ a non-negative integer, $j$, $r$, $s$ integers and real or complex $x$ and $z$.
Most linear binomial Fibonacci identities can be obtained from identities (F1) and (L1) by substituting appropriate choices of $n$, $j$, $r$, $s$, $x$ and $z$.
For example, if we write $2r$ for $r$ and set $x=(-1)^{jr}$, $z=1$ in (F1) and (L1), we obtain
\begin{equation}\label{eq.dg8fc1k}
\sum_{k = 0}^n {( - 1)^{jrk} \binom nkF_{j(2rk + s)} }  = ( - 1)^{jrn} L_{jr}^n F_{j(rn + s)},
\end{equation}

\begin{equation}\label{eq.qnc2vt5}
\sum_{k = 0}^n {( - 1)^{jrk} \binom nkL_{j(2rk + s)} }  = ( - 1)^{jrn} L_{jr}^n L_{j(rn + s)};
\end{equation}
which are valid for $n$ a non-negative integer and integers $r$, $s$ and $j$.
The special case ($s=0$) of identity~\eqref{eq.qnc2vt5} was also derived by Layman~\cite{layman77}.
As another example of linear binomial Fibonacci identities that may be derived from (F1) and (L1),
write $2r$ for $r$ and set $x=(-1)^{jr}$, $z=-1$. This gives
\begin{equation}
\sum_{k = 0}^n {( - 1)^{(jr + 1)k} \binom nkF_{j(2rk + s)} }  = \left\{ \begin{array}{l}
 5^{n/2} F_{jr}^n F_{j(rn + s)},\quad\mbox{$n$ even;}  \\
 \\ 
 ( - 1)^{jr + 1} 5^{(n - 1)/2} F_{jr}^n L_{j(rn + s)},\quad\mbox{$n$ odd};  \\ 
 \end{array} \right.
\end{equation}

\begin{equation}
\sum_{k = 0}^n {( - 1)^{(jr + 1)k} \binom nkL_{j(2rk + s)} }  = \left\{ \begin{array}{l}
 5^{n/2} F_{jr}^n L_{j(rn + s)},\quad\mbox{$n$ even;}  \\
 \\ 
 ( - 1)^{jr + 1} 5^{(n + 1)/2} F_{jr}^n F_{j(rn + s)},\quad\mbox{$n$ odd}.  \\ 
 \end{array} \right.
\end{equation}
Setting ($x=F_{p + jr}$, $z=-F_p$) and also ($x=L_{p + jr}$, $z=-L_p$) and making use of the identities of Hoggat et al, (see Lemma~\ref{lem.ydalnfx}), where $p$ is any integer, we find
\begin{equation}\label{eq.yyj9a81}
\sum_{k = 0}^n {( - 1)^k \binom nkF_{p + jr}^{n - k} F_p^k F_{j(rk + s)} }  = ( - 1)^{js + 1} F_{jr}^n F_{pn - js};
\end{equation}

\begin{equation}\label{eq.hvteivf}
\sum_{k = 0}^n {( - 1)^k \binom nkF_{p + jr}^{n - k} F_p^k L_{j(rk + s)} }  = ( - 1)^{js + 1} F_{jr}^n L_{pn - js};
\end{equation}
and
\begin{equation}\label{eq.h2mmlc6}
\sum_{k = 0}^n {( - 1)^k \binom nkL_{p + jr}^{n - k} L_p^k F_{j(rk + s)} }  = \left\{ \begin{array}{l}
 ( - 1)^{js + 1} 5^{\frac{n}{2}} F_{jr}^n F_{pn - js},\quad\mbox{$n$ even};  \\ 
  \\ 
 ( - 1)^{js + 1} 5^{\frac{{n - 1}}{2}} F_{jr}^n L_{pn - js},\quad\mbox{$n$ odd};  \\ 
 \end{array} \right.
\end{equation}

\begin{equation}\label{eq.r52qdzt}
\sum_{k = 0}^n {( - 1)^k \binom nkL_{p + jr}^{n - k} L_p^k L_{j(rk + s)} }  = \left\{ \begin{array}{l}
 ( - 1)^{js} 5^{\frac{n}{2}} F_{jr}^n L_{pn - js},\quad\mbox{$n$ even};  \\ 
  \\ 
 ( - 1)^{js} 5^{\frac{{n + 1}}{2}} F_{jr}^n F_{pn - js},\quad\mbox{$n$ odd}.  \\ 
 \end{array} \right.
\end{equation}
Identities~\eqref{eq.yyj9a81}, \eqref{eq.hvteivf} were obtained by Carlitz~\cite{carlitz78} while these and~\eqref{eq.h2mmlc6} and~\eqref{eq.r52qdzt} may be found in~Dresel~\cite{dresel93}. The special case ($s=0$) of \eqref{eq.yyj9a81} was also derived by Layman~\cite{layman77}.
\section{Required identities and preliminary results}
\begin{lemma}
For real or complex $z$, let a given well-behaved function $h(z)$ have, in its domain, the representation $h(z)=\sum_{k=c_1}^{c_2}{g(k)z^{f(k)}}$ where $f(k)$ and $g(k)$ are given real sequences and \mbox{$c_1, c_2\in [-\infty,\infty]$}. Let $j$ be an integer. Then,
\[\tag{F}
\sum_{k = c_1}^{c_2} {g(k)z^{f(k)} F_{jf(k)}^m }  = \frac{1}{{(\sqrt 5 )^m }}\sum_{i = 0}^m {( - 1)^i \binom mih\left( {\beta ^{ij} \alpha ^{(m - i)j} z} \right)}, 
\]

\[\tag{L}
\sum_{k = c_1}^{c_2} {g(k)z^{f(k)} L_{jf(k)}^m }  = \sum_{i = 0}^m {\binom mih\left( {\beta ^{ij} \alpha ^{(m - i)j} z} \right)}. 
\]
\end{lemma}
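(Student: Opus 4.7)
The plan is to prove (F) and (L) by applying the Binet formulas to $F_{jf(k)}^m$ and $L_{jf(k)}^m$, expanding via the binomial theorem, and then interchanging the order of the two summations so that the inner sum over $k$ can be recognized as $h$ evaluated at a rescaled argument.

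First I would write, using the Binet formula for Fibonacci numbers,
\[
F_{jf(k)}^m = \frac{1}{(\sqrt 5)^m}\bigl(\alpha^{jf(k)} - \beta^{jf(k)}\bigr)^m
= \frac{1}{(\sqrt 5)^m}\sum_{i=0}^m(-1)^i\binom{m}{i}\alpha^{(m-i)jf(k)}\beta^{ijf(k)}.
\]
Multiplying by $g(k)z^{f(k)}$ and summing over $k$ from $c_1$ to $c_2$, the key observation is that
\[
z^{f(k)}\alpha^{(m-i)jf(k)}\beta^{ijf(k)} = \bigl(\beta^{ij}\alpha^{(m-i)j}z\bigr)^{f(k)},
\]
so that interchanging the finite sum over $i$ with the sum over $k$ gives
\[
\sum_{k=c_1}^{c_2}g(k)z^{f(k)}F_{jf(k)}^m = \frac{1}{(\sqrt 5)^m}\sum_{i=0}^m(-1)^i\binom{m}{i}\sum_{k=c_1}^{c_2}g(k)\bigl(\beta^{ij}\alpha^{(m-i)j}z\bigr)^{f(k)},
\]
and the inner sum is by hypothesis the representation $h(\beta^{ij}\alpha^{(m-i)j}z)$. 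This establishes (F). Identity (L) follows by the same scheme, this time starting from $L_{jf(k)}^m=(\alpha^{jf(k)}+\beta^{jf(k)})^m$ and expanding without the alternating sign.

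The only real subtlety is that when $c_1=-\infty$ or $c_2=+\infty$ the interchange of the inner and outer sums requires justification. This is precisely what the phrase \emph{well-behaved function} in the hypothesis is meant to cover: I would assume that the series representing $h$ converges absolutely on a domain large enough to include each of the rescaled arguments $\beta^{ij}\alpha^{(m-i)j}z$ for $i=0,1,\dots,m$, in which case Fubini (or, equivalently, absolute convergence of a finite sum of absolutely convergent series) permits the swap. In applications throughout the paper the outer sum is in fact finite (typically $c_1=0$, $c_2=n$) and the interchange is trivial, so this step is where the hypothesis bites only in the analytic applications.

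Hence the main work is the algebraic identity $z^{f(k)}\alpha^{ajf(k)}\beta^{bjf(k)} = (\alpha^{aj}\beta^{bj}z)^{f(k)}$ which lets the exponentials be absorbed into the base; the main obstacle is the convergence caveat for infinite index ranges, which is handled by the blanket \emph{well-behaved} assumption.
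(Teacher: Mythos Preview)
Your proof is correct and follows essentially the same route as the paper: apply Binet, expand by the binomial theorem, swap the finite $i$-sum with the $k$-sum, and recognize the inner sum as $h$ at the rescaled argument. Your added remark on absolute convergence for infinite index ranges is more than the paper provides, which simply writes out the chain of equalities and says the proof of (L) is similar.
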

\begin{proof}
We have
\[
\begin{split}
\sum_{k = c_1 }^{c_2 } {g(k)z^{f(k)} F_{jf(k)}^m }  &= \sum_{k = c_1 }^{c_2 } {g(k)z^{f(k)} \frac{{\left( {\alpha ^{jf(k)}  - \beta ^{jf(k)} } \right)^m }}{{(\sqrt 5 )^m }}}\\
&= \frac{1}{{(\sqrt 5 )^m }}\sum_{k = c_1 }^{c_2 } {g(k)z^{f(k)} \sum_{i = 0}^m {( - 1)^i \binom mi\beta ^{ijf(k)} \alpha ^{(m - i)jf(k)} } }\\ 
&= \frac{1}{{(\sqrt 5 )^m }}\sum_{i = 0}^m {( - 1)^i \binom mi\sum_{k = c_1 }^{c_2 } {g(k)\left( {\beta ^{ij} \alpha ^{(m - i)j} z} \right)^{f(k)} } }\\ 
&= \frac{1}{{(\sqrt 5 )^m }}\sum_{i = 0}^m {( - 1)^i \binom mih\left( {\beta ^{ij} \alpha ^{(m - i)j} z} \right)}. 
\end{split}
\]
The proof of {\rm (L)} is similar.
\end{proof}
Since $\beta^i\alpha^{m - i} = (-1)^i\alpha^{m - 2i}$, identities {\rm (F)} and {\rm (L)} can also be written as
\[\tag{F$^{'}$}
\sum_{k = c_1}^{c_2} {g(k)z^{f(k)} F_{jf(k)}^m }  = \frac{1}{{(\sqrt 5 )^m }}\sum_{i = 0}^m {( - 1)^i \binom mih\left( {(-1) ^{ij} \alpha ^{(m - 2i)j} z} \right)}, 
\]

\[\tag{L$^{'}$}
\sum_{k = c_1}^{c_2} {g(k)z^{f(k)} L_{jf(k)}^m }  = \sum_{i = 0}^m {\binom mih\left( {(-1) ^{ij} \alpha ^{(m - 2i)j} z} \right)}. 
\]
\begin{lemma}\label{lem.o33oai3}
For non-negative integers $m$ and $n$, integers $j$, $r$ and $s$ and real or complex $x$ and $z$,
\[\tag{BF}
\sum_{k = 0}^n {\binom nkx^{n - k} z^k F_{j(rk + s)}^m }= \frac{1}{{(\sqrt 5 )^m }}\sum_{i = 0}^m {( - 1)^{i(js + 1)} \binom mi\alpha ^{(m - 2i)js} \left( {x + ( - 1)^{ijr} \alpha ^{(m - 2i)jr} z} \right)^n }, 
\]

\[\tag{BL}
\sum_{k = 0}^n {\binom nkx^{n - k} z^k L_{j(rk + s)}^m }=\sum_{i = 0}^m {( - 1)^{ijs} \binom mi\alpha ^{(m - 2i)js} \left( {x + ( - 1)^{ijr} \alpha ^{(m - 2i)jr} z} \right)^n }.
\]
\end{lemma}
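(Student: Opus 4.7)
The plan is a direct Binet calculation that reduces (BF) and (BL) to the binomial theorem, and may be viewed equivalently as an application of the identities (F$^{'}$) and (L$^{'}$) of the preceding lemma to the polynomial $h(w)=(x+w)^n$. Starting from $F_t=(\alpha^t-\beta^t)/\sqrt 5$ and expanding the $m$th power by the binomial theorem, the key simplification is the identity $\beta^{ij}\alpha^{(m-i)j}=(-1)^{ij}\alpha^{(m-2i)j}$ which follows from $\beta=-\alpha^{-1}$. After using it to regroup terms and separating the $k$-dependent factors from those depending only on $s$, one obtains
\[
F_{j(rk+s)}^{m}=\frac{1}{(\sqrt 5)^{m}}\sum_{i=0}^{m}(-1)^{i(js+1)}\binom{m}{i}\alpha^{(m-2i)js}\bigl((-1)^{ijr}\alpha^{(m-2i)jr}\bigr)^{k}.
\]

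Substituting this expansion into the left-hand side of (BF) and interchanging the order of the two finite summations yields an outer sum over $i$ whose inner $k$-sum is, by the binomial theorem,
\[
\sum_{k=0}^{n}\binom{n}{k}x^{n-k}\bigl((-1)^{ijr}\alpha^{(m-2i)jr}z\bigr)^{k}=\bigl(x+(-1)^{ijr}\alpha^{(m-2i)jr}z\bigr)^{n}.
\]
Reassembling the two summations reproduces exactly the right-hand side of (BF). The proof of (BL) is strictly analogous, starting from $L_t=\alpha^t+\beta^t$: the binomial expansion in that case carries all plus signs, so both the prefactor $(\sqrt 5)^{-m}$ and the alternating factor $(-1)^i$ disappear, accounting for the stated difference between the right-hand sides of (BF) and (BL)---in particular, the exponent $i(js+1)$ on $(-1)$ becomes simply $ijs$.

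The only step requiring genuine care is the bookkeeping of the signs $(-1)^{i(js+1)}$ and $(-1)^{ijs}$ as they are produced from $\beta=-\alpha^{-1}$, together with the uniform way in which the variable $k$ enters only through the term $((-1)^{ijr}\alpha^{(m-2i)jr})^{k}$ once the $s$-dependent factors have been stripped off. I do not anticipate any real obstacle: the entire argument consists of a Binet expansion, a swap of two finite sums, and one application of the binomial theorem.
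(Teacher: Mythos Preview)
Your proposal is correct and is essentially the paper's argument: both expand $F_{j(rk+s)}^m$ via Binet and the binomial theorem, use $\beta=-\alpha^{-1}$ to rewrite $\beta^{ij}\alpha^{(m-i)j}=(-1)^{ij}\alpha^{(m-2i)j}$, swap the two finite sums, and collapse the inner $k$-sum with the binomial theorem. The only cosmetic difference is that the paper routes the computation through the preceding lemma with the choice $f(k)=rk+s$, $g(k)=\binom{n}{k}x^{n-k}$, i.e.\ $h(z)=z^{s}(x+z^{r})^{n}$ (not $h(w)=(x+w)^n$ as in your aside), and then substitutes $z\mapsto z^{1/r}$ at the end, whereas your direct calculation avoids that slightly artificial step.
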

\begin{proof}
Consider the binomial identity
\begin{equation}
h(z) = \sum_{k = 0}^n {g(k) z^{f(k)} }  = z^s (x + z^r )^n,
\end{equation}
where 
\begin{equation}\label{eq.rm3s06v}
f(k)=rk+s,\quad g(k)=\binom nkx^{n-k}.
\end{equation}
Thus,
\begin{equation}\label{eq.mcl68vg}
h\left( {(-1) ^{ij} \alpha ^{(m - 2i)j} z} \right)=(-1) ^{ijs} \alpha ^{(m - 2i)js} z^s(x + (-1) ^{ijr} \alpha ^{(m - 2i)jr} z^r)^n.
\end{equation}
Use of~\eqref{eq.rm3s06v} and~\eqref{eq.mcl68vg} in identity~({\rm F}$^{'}$), with $c_1=0$, $c_2=n$ gives
\[
\begin{split}
\sum_{k = 0}^n {\binom nkx^{n - k} z^{rk} F_{j(rk + s)}^m }= \frac{1}{{(\sqrt 5 )^m }}\sum_{i = 0}^m {( - 1)^{i(js + 1)} \binom mi\alpha ^{(m - 2i)js} \left( {x + ( - 1)^{ijr} \alpha ^{(m - 2i)jr} z^r} \right)^n }, 
\end{split}
\]
from which identity {(\rm BF)} follows when we write $z^{1/r}$ for $z$.
To prove {\rm (BL)}, use~\eqref{eq.rm3s06v} and~\eqref{eq.mcl68vg} in identity~({\rm L}$^{'}$).
\end{proof}
It is sometimes convenient to use the (\mbox{$\alpha$ vs $\beta$})  version of identities {(\rm BF)} and {(\rm BL)}:
\[\tag{BF$^{'}$}
\sum_{k = 0}^n {\binom nkx^{n - k} z^k F_{j(rk + s)}^m }  = \frac{1}{{(\sqrt 5 )^m }}\sum_{i = 0}^m {( - 1)^i \binom mi\beta ^{ijs} \alpha ^{(m - i)js} \left( {x + \beta ^{ijr} \alpha ^{(m - i)jr} z} \right)^n },
\]

\[\tag{BL$^{'}$}
\sum_{k = 0}^n {\binom nkx^{n - k} z^k L_{j(rk + s)}^m }  = \sum_{i = 0}^m {\binom mi\beta ^{ijs} \alpha ^{(m - i)js} \left( {x + \beta ^{ijr} \alpha ^{(m - i)jr} z} \right)^n }.
\]
\begin{lemma}[Hoggatt et al~\cite{hoggatt71}]\label{lem.ydalnfx}
For $p$ and $q$ integers,
\begin{gather}
L_{p + q}  - L_p \alpha ^q  =  - \beta ^p F_q \sqrt 5,\\ 
L_{p + q}  - L_p \beta ^q  = \alpha ^p F_q \sqrt 5,\\ 
F_{p + q}  - F_p \alpha ^q  = \beta ^p F_q,\\ 
F_{p + q}  - F_p \beta ^q  = \alpha ^p F_q. 
\end{gather}
\end{lemma}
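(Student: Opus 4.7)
The plan is to derive all four identities by direct substitution of the Binet formulas $F_n=(\alpha^n-\beta^n)/\sqrt5$ and $L_n=\alpha^n+\beta^n$ into the left-hand side, followed by an elementary factorization. No induction, recursion, or auxiliary lemma is required; the identities are essentially telescoping consequences of the exponential form of $F_n$ and $L_n$.

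Concretely, for the first identity I would write
\[
L_{p+q}-L_p\alpha^q=(\alpha^{p+q}+\beta^{p+q})-(\alpha^p+\beta^p)\alpha^q,
\]
observe that the two $\alpha^{p+q}$ terms cancel, leaving $\beta^{p+q}-\beta^p\alpha^q=\beta^p(\beta^q-\alpha^q)$, and then recognize the parenthesized factor as $-\sqrt5\,F_q$. The second identity follows by the same calculation with the roles of $\alpha$ and $\beta$ interchanged (now the $\beta^{p+q}$ terms cancel and a factor of $\alpha^p(\alpha^q-\beta^q)=\alpha^p\sqrt5\,F_q$ remains). For the third identity, expanding $F_{p+q}-F_p\alpha^q$ with the Binet formula produces $((\alpha^{p+q}-\beta^{p+q})-(\alpha^p-\beta^p)\alpha^q)/\sqrt5$; after the $\alpha^{p+q}$ cancellation this reduces to $\beta^p(\alpha^q-\beta^q)/\sqrt5=\beta^pF_q$. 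The fourth identity is proved symmetrically by swapping $\alpha$ and $\beta$ throughout.

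There is no real obstacle here; the identities are direct and the manipulations are one-line each. The only point worth flagging is that the four identities are not independent: the $L$-pair and the $F$-pair are each related by the Galois-type symmetry $\alpha\leftrightarrow\beta$ (which fixes $L_n$ and sends $F_n$ to $-F_n$), so once one identity from each pair has been verified, its partner is immediate. I would present all four computations in a single displayed block for uniformity, noting the $\alpha\leftrightarrow\beta$ symmetry as a way to halve the work.
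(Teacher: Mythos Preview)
Your proof is correct: direct substitution of the Binet formulas followed by cancellation of the matching $\alpha^{p+q}$ (or $\beta^{p+q}$) terms yields each identity in one line, and the $\alpha\leftrightarrow\beta$ symmetry you note does indeed pair them up.

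There is nothing to compare against, however, because the paper does not prove this lemma at all. It is stated with attribution to Hoggatt, Phillips and Leonard~\cite{hoggatt71} and used as a black box in the proofs of Theorem~1 and of identities~\eqref{eq.yyj9a81}--\eqref{eq.r52qdzt}. Your Binet-formula verification is exactly the kind of elementary argument one would supply if a self-contained proof were wanted.
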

Quadratic binomial Fibonacci identities may be obtained from $m=2$ in (BF$^{'}$) and (BL$^{'}$):
\[\tag{F2}
\begin{split}
5\sum_{k = 0}^n {\binom nkx^{n - k} z^k F_{j(rk + s)}^2 }  &= \alpha ^{2js} (x + \alpha ^{2jr} z)^n  + \beta ^{2js} (x + \beta ^{2jr} z)^n\\ 
&\qquad - 2( - 1)^{js} (x + ( - 1)^{jr} z)^n, 
\end{split}
\]

\[\tag{L2}
\begin{split}
\sum_{k = 0}^n {\binom nkx^{n - k} z^k L_{j(rk + s)}^2 }  &= \alpha ^{2js} (x + \alpha ^{2jr} z)^n  + \beta ^{2js} (x + \beta ^{2jr} z)^n\\ 
&\qquad + 2( - 1)^{js} (x + ( - 1)^{jr} z)^n.
\end{split}
\]
\begin{theorem}
For non-negative integer $n$ and integers $j$, $r$, $s$, $p$,
\begin{equation}\label{eq.ugl2run}
\sum_{k = 0}^n {( - 1)^k \binom nkF_{2jr + p}^{n - k} F_p^k F_{j(rk + s)}^2 }  = \frac15(F_{2jr}^n L_{pn - 2js}  - ( - 1)^{js} 2F_{jr}^n L_{jr + p}^n),\quad\mbox{$p\ne0$}, 
\end{equation}

\begin{equation}\label{eq.zq8hing}
\sum_{k = 0}^n {( - 1)^k \binom nkF_{2jr + p}^{n - k} F_p^k L_{j(rk + s)}^2 }  = F_{2jr}^n L_{pn - 2js}  + ( - 1)^{js} 2F_{jr}^n L_{jr + p}^n,\quad\mbox{$p\ne0$}, 
\end{equation}

\begin{equation}\label{eq.qlndhp6}
\sum_{k = 0}^n {( - 1)^k \binom nkL_{2jr + p}^{n - k} L_p^k F_{j(rk + s)}^2 }  = \left\{ \begin{array}{l}
 5^{n/2 - 1} F_{2jr}^n L_{pn - 2js}  - ( - 1)^{js} 5^{n - 1} 2F_{jr}^n F_{jr + p}^n,\quad\mbox{$n$ even};  \\ 
 5^{(n - 1)/2} F_{2jr}^n F_{pn - 2js}  - ( - 1)^{js} 5^{n - 1} 2F_{jr}^n F_{jr + p}^n,\quad\mbox{$n$ odd},  \\ 
 \end{array} \right.
\end{equation}
and
\begin{equation}\label{eq.om6lsdf}
\sum_{k = 0}^n {( - 1)^k \binom nkL_{2jr + p}^{n - k} L_p^k L_{j(rk + s)}^2 }  = \left\{ \begin{array}{l}
 5^{n/2} F_{2jr}^n L_{pn - 2js}  + ( - 1)^{js} 5^{n} 2F_{jr}^n F_{jr + p}^n,\quad\mbox{$n$ even};  \\ 
 5^{(n + 1)/2} F_{2jr}^n F_{pn - 2js}  + ( - 1)^{js} 5^{n} 2F_{jr}^n F_{jr + p}^n,\quad\mbox{$n$ odd}.  \\ 
 \end{array} \right.
\end{equation}
\end{theorem}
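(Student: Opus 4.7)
The plan is to derive all four identities by a single strategy: specialize the quadratic binomial identities (F2) and (L2) to two carefully chosen pairs $(x,z)$, then simplify the three ``inner'' expressions $x+\alpha^{2jr}z$, $x+\beta^{2jr}z$, and $x+(-1)^{jr}z$ using Lemma~\ref{lem.ydalnfx} together with the standard identities $F_{m+n}-(-1)^nF_{m-n}=L_mF_n$, $L_{m+n}-(-1)^nL_{m-n}=5F_mF_n$, both of which drop straight out of the Binet formulas. For the Fibonacci-weighted identities \eqref{eq.ugl2run} and \eqref{eq.zq8hing}, I would take $x=F_{2jr+p}$, $z=-F_p$. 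Then Lemma~\ref{lem.ydalnfx} (the last two equations, with $q=2jr$) gives $F_{2jr+p}-\alpha^{2jr}F_p=\beta^p F_{2jr}$ and $F_{2jr+p}-\beta^{2jr}F_p=\alpha^p F_{2jr}$, while the $F$-version of the subtraction formula above yields $F_{2jr+p}-(-1)^{jr}F_p=F_{jr}L_{jr+p}$.

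Plugging these into (F2) collapses the right-hand side to $F_{2jr}^n(\alpha^{2js}\beta^{pn}+\beta^{2js}\alpha^{pn})-2(-1)^{js}F_{jr}^nL_{jr+p}^n$. The inner combination is a symmetric Binet expression, which I would reduce by the computation $\alpha^a\beta^b+\alpha^b\beta^a=(-1)^bL_{a-b}$ (immediate from $\alpha\beta=-1$) and the identity $L_{-t}=(-1)^tL_t$ to obtain $L_{pn-2js}$. Dividing by the factor of $5$ built into (F2) gives \eqref{eq.ugl2run}; the analogous computation with (L2), which differs only in the sign of the last term and has no prefactor, yields \eqref{eq.zq8hing}.

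For the Lucas-weighted identities \eqref{eq.qlndhp6} and \eqref{eq.om6lsdf}, I would take $x=L_{2jr+p}$, $z=-L_p$. Now the first two equations of Lemma~\ref{lem.ydalnfx} (with $q=2jr$) give $L_{2jr+p}-\alpha^{2jr}L_p=-\beta^p F_{2jr}\sqrt5$ and $L_{2jr+p}-\beta^{2jr}L_p=\alpha^p F_{2jr}\sqrt5$, while the $L$-version of the subtraction identity above gives $L_{2jr+p}-(-1)^{jr}L_p=5F_{jr}F_{jr+p}$. Substituting into (F2) and (L2), the $\sqrt5$'s generate a $5^{n/2}$ factor out front and an extra $(-1)^n$ inside the first term, so the symmetric combination becomes $(-1)^n\alpha^{2js}\beta^{pn}+\beta^{2js}\alpha^{pn}$. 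This is exactly where the parity split appears: for $n$ even it reduces to $L_{pn-2js}$ as before, while for $n$ odd it becomes $\alpha^{pn}\beta^{2js}-\alpha^{2js}\beta^{pn}=\sqrt5\,F_{pn-2js}$ (again from $\alpha\beta=-1$), absorbing a further half-power of $5$.

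The only real obstacle is bookkeeping: keeping the signs, the $(-1)^{js}$ versus $(-1)^{jr}$ factors, and the distribution of the $5^{n/2}$ and $\sqrt5$ factors straight through the even/odd case analysis. No new identity beyond Lemma~\ref{lem.ydalnfx}, the two trivial Binet reductions of $\alpha^a\beta^b\pm\alpha^b\beta^a$, and the two ``addition/subtraction'' identities above is needed; everything else is substitution into (F2)/(L2) and collecting terms.
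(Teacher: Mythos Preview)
Your proposal is correct and follows essentially the same route as the paper: the paper also specializes (F2) and (L2) first with $x=F_{2jr+p}$, $z=-F_p$ and then with $x=L_{2jr+p}$, $z=-L_p$, invoking Lemma~\ref{lem.ydalnfx} to simplify the resulting expressions. The only difference is that you make explicit the reductions $F_{2jr+p}-(-1)^{jr}F_p=F_{jr}L_{jr+p}$, $L_{2jr+p}-(-1)^{jr}L_p=5F_{jr}F_{jr+p}$, and the Binet identifications of $\alpha^{2js}\beta^{pn}\pm\alpha^{pn}\beta^{2js}$, whereas the paper leaves these as routine.
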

\begin{proof}
Choose $x=F_{2jr+p}$, $z=-F_p$ in (F2), noting Lemma~\ref{lem.ydalnfx}, to obtain
\[
\begin{split}
5\sum_{k = 0}^n {( - 1)^k \binom nkF_{2jr + p}^{n - k} F_p^k F_{j(rk + s)}^2 }  &= F_{2jr}^n (\alpha ^{2js} \beta ^{pn}  + \alpha ^{pn} \beta ^{2js} )\\
&\qquad - 2( - 1)^{js} (F_{2jr + p}  - ( - 1)^{jr} F_p )^n,
\end{split}
\]
from which identity~\eqref{eq.ugl2run} follows. The same $(x,z)$ choice in (L2) produces identity~\eqref{eq.zq8hing}.

Set $x=L_{2jr+p}$, $z=-L_p$ in (F2), utilizing Lemma~\ref{lem.ydalnfx}. This gives
\[
\begin{split}
5\sum_{k = 0}^n {( - 1)^k \binom nkL_{2jr + p}^{n - k} L_p^k F_{j(rk + s)}^2 }  &= F_{2jr}^n (\sqrt 5 )^n (\alpha ^{pn - 2js}  + (-1)^n\beta ^{pn - 2js} )\\
&\qquad - 2( - 1)^{js} (L_{2jr + p}  - ( - 1)^{jr} L_p )^n; 
\end{split}
\]
and hence identity~\eqref{eq.qlndhp6}. The same $(x,z)$ choice in (L2) produces identity~\eqref{eq.om6lsdf}.
\end{proof}
Cubic binomial Fibonacci identities may be obtained from $m=3$ in (BF$^{'}$) and (BL$^{'}$):
\[\tag{F3}
\begin{split}
5\sqrt 5 \sum_{k = 0}^n {\binom nkx^{n - k} z^k F_{j(rk + s)}^3 }  &= \alpha ^{3js} (x + \alpha ^{3jr} z)^n  - \beta ^{3js} (x + \beta ^{3jr} z)^n\\
&\qquad - ( - 1)^{js} 3\alpha ^{js} (x + ( - 1)^{jr} \alpha ^{jr} z)^n\\
&\quad\qquad + ( - 1)^{js} 3\beta ^{js} (x + ( - 1)^{jr} \beta ^{jr} z)^n, 
\end{split}
\]

\[\tag{L3}
\begin{split}
\sum_{k = 0}^n {\binom nkx^{n - k} z^k L_{j(rk + s)}^3 }  &= \alpha ^{3js} (x + \alpha ^{3jr} z)^n  + \beta ^{3js} (x + \beta ^{3jr} z)^n\\
&\qquad + ( - 1)^{js} 3\alpha ^{js} (x + ( - 1)^{jr} \alpha ^{jr} z)^n\\
&\quad\qquad + ( - 1)^{js} 3\beta ^{js} (x + ( - 1)^{jr} \beta ^{jr} z)^n. 
\end{split}
\]
\begin{theorem}
For non-negative integer $n$ and any integer $s$,
\begin{equation}\label{eq.zfnns38}
\sum_{k = 0}^n {\binom nkF_{k + s}^3 }  = \frac{1}{5}(2^n F_{2n + 3s}  + 3F_{n - s} ),
\end{equation}

\begin{equation}\label{eq.ldhv9n5}
\sum_{k = 0}^n {\binom nkL_{k + s}^3 }  = 2^n L_{2n + 3s}  + 3L_{n - s},
\end{equation}

\begin{equation}\label{eq.kr0hhlt}
\sum_{k = 0}^n {\binom nk(-1)^kF_{k + s}^3 }  = \frac{1}{5}((-1)^n2^n F_{n + 3s}  - (-1)^s3F_{2n + s} ),
\end{equation}

\begin{equation}\label{eq.yryk5gj}
\sum_{k = 0}^n {(-1)^k\binom nkL_{k + s}^3 }  = (-1)^n2^n L_{n + 3s}  + (-1)^s3L_{2n + s},
\end{equation}

\begin{equation}\label{eq.vkvxrgg}
\sum_{k = 0}^n {\binom nk2^k F_{k + s}^3 }  = \left\{ \begin{array}{l}
 5^{n/2 - 1} (F_{3n + 3s}  - ( - 1)^s 3F_s ),\quad\mbox{$n$ even}; \\ 
 5^{(n - 3)/2} (L_{3n + 3s}  + ( - 1)^s 3L_s )\quad\mbox{$n$ odd}, \\ 
 \end{array} \right.
\end{equation}

\begin{equation}\label{eq.hl9in1f}
\sum_{k = 0}^n {\binom nk2^k L_{k + s}^3 }  = \left\{ \begin{array}{l}
 5^{n/2} (L_{3n + 3s}  + ( - 1)^s 3L_s ),\quad\mbox{$n$ even}; \\ 
 5^{(n + 1)/2} (F_{3n + 3s}  - ( - 1)^s 3F_s )\quad\mbox{$n$ odd}. \\ 
 \end{array} \right.
\end{equation}
\end{theorem}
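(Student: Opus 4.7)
The plan is to obtain each of the six identities as a specialization of the cubic master formulas (F3) and (L3), setting $j=r=1$ throughout and choosing $(x,z)$ so that the four bases $x+\alpha^{3}z$, $x+\beta^{3}z$, $x-\alpha z$, $x-\beta z$ collapse to pure powers of $\alpha,\beta$ (possibly scaled by $\sqrt 5$). Once that happens, the Binet formulas immediately convert each right-hand side into the desired combination of Fibonacci and Lucas numbers. The only auxiliary identities needed are the consequences of $\alpha^2=\alpha+1$ and $\alpha+\beta=1$, namely
\[
1+\alpha^3=2\alpha^2,\qquad 1-\alpha^3=-2\alpha,\qquad 1+\alpha=\alpha^2,\qquad 1-\alpha=\beta,
\]
together with $\sqrt 5=2\alpha-1$, which yields $1+2\alpha^3=\sqrt 5\,\alpha^3$ and $1-2\alpha=-\sqrt 5$ (with the analogous $\alpha\leftrightarrow\beta$ conjugates).

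For \eqref{eq.zfnns38} and \eqref{eq.ldhv9n5} the right specialization is $x=z=1$: the four bases become $2\alpha^2$, $2\beta^2$, $\beta$, $\alpha$, so the first pair yields the $2^n F_{2n+3s}$ or $2^n L_{2n+3s}$ term, while the second pair, after using $(\alpha\beta)^s=(-1)^s$ to absorb the $(-1)^{js}$ prefactors in (F3), (L3), produces the $3F_{n-s}$ or $3L_{n-s}$ remainder. For \eqref{eq.kr0hhlt} and \eqref{eq.yryk5gj} the choice is $x=1$, $z=-1$, so the bases become $-2\alpha$, $-2\beta$, $\alpha^2$, $\beta^2$, producing the $(-1)^n 2^n F_{n+3s}$ (or Lucas analogue) and $(-1)^s 3F_{2n+s}$ (or Lucas analogue) contributions directly.

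The substantive case is \eqref{eq.vkvxrgg} and \eqref{eq.hl9in1f}, which I would handle with $x=1$, $z=2$. Every base is then a multiple of $\sqrt 5$: the four bases are $\sqrt 5\,\alpha^3$, $-\sqrt 5\,\beta^3$, $-\sqrt 5$, $\sqrt 5$. Raising to the $n$th power introduces a factor $5^{n/2}$ together with an extra $(-1)^n$ whenever the base carries a minus sign, so the $\alpha$-$\beta$ combinations on the right collapse to Fibonacci differences when $n$ is even but to Lucas sums when $n$ is odd, which is precisely why the closed form branches on the parity of $n$. The only obstacle worth flagging is keeping the $(-1)^{js}$, $(-1)^{n}$, and $(-1)^s$ prefactors straight as they combine; once that sign bookkeeping is done, (F3) and (L3) deliver the stated formulas after dividing by $5\sqrt 5$ or simply collecting terms.
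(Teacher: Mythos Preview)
Your proposal is correct and follows essentially the same route as the paper: specialize the cubic master identities (F3) and (L3) with $j=r=1$ and the three choices $(x,z)=(1,1)$, $(1,-1)$, $(1,2)$, then use the Binet formulas to collapse the resulting $\alpha,\beta$ expressions. The paper's proof is terser about the algebraic simplifications, but your explicit computation of the four bases in each case (especially the $\sqrt 5$ factorizations for $z=2$) is exactly the content behind its displayed intermediate equalities.
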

\begin{proof}
Set $x=1$, $z=1$, $j=1$, $r=1$ in (F3) to obtain
\[
5\sqrt 5 \sum_{k = 0}^n {\binom nkF_{k + s}^3 }  = 2^n (\alpha ^{3s + 2n}  - \beta ^{3s + 2n} ) + 3(\alpha ^{n - s}  - \beta ^{n - s} );
\]
and hence identity~\eqref{eq.zfnns38}. To prove identity~\eqref{eq.ldhv9n5}, use these $(x,z,j,\ldots)$ values in (L3). To prove identity~\eqref{eq.kr0hhlt}, set $x=1$, $z=-1$, $j=1$, $r=1$ in (F3) to get
\[
5\sqrt 5 \sum_{k = 0}^n {( - 1)^k \binom nkF_{k + s}^3 }  = ( - 1)^n 2^n (\alpha ^{n + 3s}  - \beta ^{n + 3s} ) - 3( - 1)^s (\alpha ^{2n + s}  - \beta ^{2n + s} ),
\]
from which the identity follows. The proof of~\eqref{eq.yryk5gj} is similar. Use these values in (L3). The proof of~\eqref{eq.vkvxrgg} proceeds with the choice $j=1$, $r=1$, $x=1$, $z=2$ in (F3), giving
\[
\begin{split}
5\sqrt 5 \sum_{k = 0}^n {2^k \binom nkF_{k + s}^3 } & = (\sqrt 5 )^n(\alpha ^{3n + 3s}  - ( - 1)^n \beta ^{3n + 3s} )\\
&\qquad- 3( - 1)^{n + s} (\sqrt 5 )^n (\alpha ^s  - ( - 1)^n \beta ^s ),
\end{split}
\]
from which the identity follows in accordance with the parity of $n$.
The proof of~\eqref{eq.hl9in1f} is similar. Use these $(x,z,j,\ldots)$ values in (L3).
\end{proof}
\begin{lemma}\label{lem.rational}
Let $a$, $b$, $c$ and $d$ be rational numbers and $\lambda$ an irrational number. Then,
\[
a + \lambda b = c + \lambda d \iff a = c,\quad b = d\,.
\]
\end{lemma}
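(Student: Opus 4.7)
The plan is to prove the two directions separately, with the backward direction being immediate and the forward direction following from a one-line contradiction argument.

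First I would dispose of the ($\Leftarrow$) direction: if $a=c$ and $b=d$, then trivially $a+\lambda b = c+\lambda d$, with no use of the irrationality of $\lambda$.

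For the ($\Rightarrow$) direction, I would start from $a+\lambda b = c+\lambda d$ and rearrange to $a-c = \lambda(d-b)$. The key observation is a case split on whether $b=d$. If $b\ne d$, then I can divide to get $\lambda = (a-c)/(d-b)$; since $a,b,c,d$ are rational and $d-b$ is a nonzero rational, the right-hand side is rational, contradicting the hypothesis that $\lambda$ is irrational. Hence $b=d$, and substituting this back into $a-c=\lambda(d-b)=0$ yields $a=c$.

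The argument is entirely elementary and there is no real obstacle; the only thing to be careful about is ensuring the case $d-b=0$ is handled before performing the division, which is exactly what the case split accomplishes. No further machinery from the paper is needed.
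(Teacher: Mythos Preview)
Your argument is correct and complete. The paper itself states this lemma without proof, so there is nothing to compare against; your elementary contradiction argument (with the careful case split on $b=d$ before dividing) is exactly the standard justification and fully suffices.
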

\begin{lemma}\label{lem.jyv8ck6}
If $m$ is an integer and $(f(i))$ a real sequence, then,
\begin{equation}\label{eq.hccmg0d}
\sum_{i = 0}^{2m} {f(i)}  = f(m) + \sum_{i = 0}^{m - 1} {(f(i) + f(2m - i))},
\end{equation}

\begin{equation}
\sum_{i = 0}^{2m + 1} {f(i)}  = f(2m + 1) - f(m) + \sum_{i = 0}^m {(f(i) + f(2m - i))}.
\end{equation}
\end{lemma}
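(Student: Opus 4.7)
The plan is to prove both identities by elementary manipulation alone, namely splitting the range of summation around the midpoint and reindexing the upper half by the involution $i \mapsto 2m-i$. No induction or generating functions are required; the lemma is really bookkeeping that will be invoked later to split sums of the form $\sum_{i=0}^{m}(\cdots)$ appearing in (BF$'$) and (BL$'$) according to the parity of $m$.

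For the first identity~\eqref{eq.hccmg0d}, the step I would carry out is to write
\[
\sum_{i=0}^{2m} f(i) \;=\; \sum_{i=0}^{m-1} f(i) \;+\; f(m) \;+\; \sum_{i=m+1}^{2m} f(i),
\]
and then apply the substitution $j = 2m - i$ to the last sum: as $i$ runs over $m+1,\dots,2m$, the new index $j$ runs over $m-1,\dots,0$, so that piece becomes $\sum_{j=0}^{m-1} f(2m-j)$. Relabeling $j$ back to $i$ and combining with the first piece yields the claimed expression. This is a single change of summation variable and requires no further cleverness.

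For the second identity, my plan is to peel off the top term,
\[
\sum_{i=0}^{2m+1} f(i) \;=\; f(2m+1) \;+\; \sum_{i=0}^{2m} f(i),
\]
apply~\eqref{eq.hccmg0d} to the remaining sum, and then reconcile the index range. The result of substitution is $f(2m+1) + f(m) + \sum_{i=0}^{m-1}(f(i)+f(2m-i))$, whereas the statement carries the sum up to $i=m$; extending the upper limit by one contributes an extra $f(m)+f(m)=2f(m)$, which converts the leftover $+f(m)$ into the $-f(m)$ demanded by the statement. The only sanity checks are the boundary case $m=0$ (where the sum $\sum_{i=0}^{m-1}$ is empty) and the tacit requirement $m\ge 0$; both are trivially fine. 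I anticipate no real obstacle, as the entire argument is a reindexing exercise.
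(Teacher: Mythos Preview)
Your argument is correct. The paper states this lemma without proof (it is treated as an elementary bookkeeping fact), so there is no ``paper's own proof'' to compare against; your reindexing via $i\mapsto 2m-i$ and the subsequent adjustment of the upper limit for the second identity is exactly the standard justification and would be accepted without reservation.
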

In particular, if $f(2m-i)=f(i)$, then,
\begin{equation}
\sum_{i = 0}^{2m} {f(i)}  = f(m) + 2\sum_{i = 0}^{m - 1} {f(i)},
\end{equation}

\begin{equation}
\sum_{i = 0}^{2m + 1} {f(i)}  = f(2m + 1) - f(m) + 2\sum_{i = 0}^m {f(i)}.
\end{equation}
\begin{lemma}\label{lem.dgcin1i}
For $p$ and $q$ integers,
\begin{equation}\label{eq.z1yfyw6}
1 + ( - 1)^p \alpha ^{2q}  = \left\{ \begin{array}{l}
 ( - 1)^p \alpha ^q F_q \sqrt 5,\quad\mbox{$p$ and $q$ have different parity;}  \\ 
 ( - 1)^p \alpha ^q L_q,\quad\mbox{$p$ and $q$ have the same parity.}  \\ 
 \end{array} \right.
\end{equation}

\begin{equation}\label{eq.xcamhlp}
1 - ( - 1)^p \alpha ^{2q}  = \left\{ \begin{array}{l}
 ( - 1)^{p - 1} \alpha ^q L_q,\quad\mbox{$p$ and $q$ have different parity};  \\ 
 ( - 1)^{p - 1} \alpha ^q F_q \sqrt 5,\quad\mbox{$p$ and $q$ have the same parity}.  \\ 
 \end{array} \right.
\end{equation}
\end{lemma}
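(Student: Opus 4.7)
The plan is to reduce both assertions to the two algebraic identities
\[
\alpha^q F_q \sqrt{5} = \alpha^{2q} - (-1)^q, \qquad \alpha^q L_q = \alpha^{2q} + (-1)^q,
\]
which follow at once from the Binet formulas $F_q = (\alpha^q - \beta^q)/\sqrt{5}$ and $L_q = \alpha^q + \beta^q$ together with the relation $\alpha\beta = -1$, giving $(\alpha\beta)^q = (-1)^q$. Since both $F_q$ and $L_q$ share the factor $\alpha^{2q}$ and differ only by the sign of $(-1)^q$, the strategy is to multiply these identities by $(-1)^p$ or $(-1)^{p-1}$ and then read off the right-hand side of \eqref{eq.z1yfyw6} and \eqref{eq.xcamhlp} according to whether $(-1)^{p+q}$ equals $+1$ or $-1$.

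Concretely, for the first identity I would write
\[
(-1)^p \alpha^q F_q \sqrt{5} = (-1)^p \alpha^{2q} - (-1)^{p+q}, \qquad (-1)^p \alpha^q L_q = (-1)^p \alpha^{2q} + (-1)^{p+q}.
\]
When $p$ and $q$ have different parity, $(-1)^{p+q} = -1$, so the first expression becomes $(-1)^p \alpha^{2q} + 1$, matching the top line of \eqref{eq.z1yfyw6}. When $p$ and $q$ share parity, $(-1)^{p+q} = +1$, so the second expression becomes $(-1)^p \alpha^{2q} + 1$, matching the bottom line. The identity \eqref{eq.xcamhlp} is proved analogously: multiply the two algebraic identities by $(-1)^{p-1}$ and again split on the parity of $p+q$, using $(-1)^{p-1+q} = -(-1)^{p+q}$ to flip signs appropriately.

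There is no real obstacle here; the only thing to watch is the bookkeeping of parities, so I would organize the proof as a two-line computation of $\alpha^q F_q\sqrt{5}$ and $\alpha^q L_q$ followed by a short case distinction on the parity of $p+q$ for each of the two claimed identities. The whole argument is elementary and fits comfortably in a few lines.
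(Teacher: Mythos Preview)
Your proposal is correct and follows essentially the same approach as the paper: both reduce the lemma to the two basic identities $\alpha^q L_q=\alpha^{2q}+(-1)^q$ and $\alpha^q F_q\sqrt5=\alpha^{2q}-(-1)^q$ (the paper derives the $(-1)^p$-scaled versions directly via the factorization $\alpha^{p+q}\beta^{p+q}+\alpha^{p+2q}\beta^p=\alpha^{p+q}\beta^p(\alpha^q+\beta^q)$, which is the same computation with the $(-1)^p$ carried along as $\alpha^p\beta^p$), and then split on the parity of $p+q$.
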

\begin{proof}
We have
\begin{equation}\label{eq.sfygluu}
\begin{split}
( - 1)^{p + q}  + ( - 1)^p \alpha ^{2q} & = \alpha ^{p + q} \beta ^{p + q}  + \alpha ^{p + 2q} \beta ^p\\ 
 &= \alpha ^{p + q} \beta ^p (\alpha ^q  + \beta ^q )\\
 &= ( - 1)^p \alpha ^q L_q. 
\end{split}
\end{equation}
Similarly,
\begin{equation}\label{eq.b9sqn42}
( - 1)^{p + q}  - ( - 1)^p \alpha ^{2q}  = ( - 1)^{p - 1} \alpha ^q F_q \sqrt 5. 
\end{equation}
Corresponding to~\eqref{eq.sfygluu} and~\eqref{eq.b9sqn42} we have
\begin{equation}\label{eq.hu8kfq5}
( - 1)^{p + q}  + ( - 1)^p \beta ^{2q} = ( - 1)^p \beta ^q L_q 
\end{equation}
and
\begin{equation}\label{eq.ok9lld2}
( - 1)^{p + q}  - ( - 1)^p \beta ^{2q}  = ( - 1)^{p} \beta ^q F_q \sqrt 5. 
\end{equation}
\end{proof}
Identities~\eqref{eq.sfygluu}, \eqref{eq.b9sqn42}, \eqref{eq.hu8kfq5} and \eqref{eq.ok9lld2} imply
\begin{gather}
( - 1)^q  + \alpha ^{2q}  = \alpha ^q L_q, \\
( - 1)^q  - \alpha ^{2q}  =  - \alpha ^q F_q \sqrt 5, \\
( - 1)^q  + \beta ^{2q}  = \beta ^q L_q, \\
( - 1)^q  - \beta ^{2q}  = \beta ^q F_q \sqrt 5.
\end{gather}
\section{Main results}
\begin{theorem}\label{thm.kqnrcog}
Let $m$ and $n$ be non-negative integers and let $j$, $r$ and $s$ be any integers. Then,
\begin{equation}\label{eq.ix0nr87}
\begin{split}
&\sum_{k = 0}^n {\binom nkF_{j(rk + s)}^{2m} }\\
& = \left\{ \begin{array}{l}
 5^{ - m} \sum_{i = 0}^{m - 1} {( - 1)^{i(js + jrn + 1)} \binom{2m}iL_{(m - i)jr}^n L_{(m - i)(jrn + 2js)} }  + ( - 1)^{m(js + 1)} \binom{2m}m5^{ - m} 2^n,\quad\mbox{$jmr$ even};  \\ 
 5^{n/2 - m} \sum_{i = 0}^{m - 1} {( - 1)^{i(s + 1)} \binom{2m}iF_{(m - i)jr}^n L_{(m - i)(jrn + 2js)} },\quad\mbox{$jmr$ odd, $n$ even};  \\ 
 5^{(n + 1)/2 - m} \sum_{i = 0}^{m - 1} {( - 1)^{is} \binom{2m}iF_{(m - i)jr}^n F_{(m - i)(jrn + 2js)} },\quad\mbox{$jmr$ odd, $n$ odd};  \\ 
 \end{array} \right.
\end{split}
\end{equation}
\begin{equation}\label{eq.bu0jaqx}
\begin{split}
&\sum_{k = 0}^n {\binom nkL_{j(rk + s)}^{2m} }\\
& = \left\{ \begin{array}{l}
  \sum_{i = 0}^{m - 1} {( - 1)^{i(js + jrn)} \binom{2m}iL_{(m - i)jr}^n L_{(m - i)(jrn + 2js)} }  + ( - 1)^{mjs} \binom{2m}m 2^n,\quad\mbox{$jmr$ even};  \\ 
 5^{n/2} \sum_{i = 0}^{m - 1} {( - 1)^{is} \binom{2m}iF_{(m - i)jr}^n L_{(m - i)(jrn + 2js)} },\quad\mbox{$jmr$ odd, $n$ even};  \\ 
 5^{(n + 1)/2} \sum_{i = 0}^{m - 1} {( - 1)^{i(s + 1)} \binom{2m}iF_{(m - i)jr}^n F_{(m - i)(jrn + 2js)} },\quad\mbox{$jmr$ odd, $n$ odd}.  \\ 
 \end{array} \right.
\end{split}
\end{equation}
\end{theorem}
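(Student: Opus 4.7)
The plan is to set $x=z=1$ and replace $m$ by $2m$ in identity (BF), which immediately expresses $\sum_{k=0}^n\binom{n}{k}F_{j(rk+s)}^{2m}$ as $5^{-m}$ times a sum over $i=0,1,\ldots,2m$ of terms
\[
T_i = (-1)^{i(js+1)}\binom{2m}{i}\alpha^{(2m-2i)js}\bigl(1+(-1)^{ijr}\alpha^{(2m-2i)jr}\bigr)^n.
\]
Since $\binom{2m}{i}=\binom{2m}{2m-i}$ and the exponents $(2m-2i)js,(2m-2i)jr$ are even, one can use $\alpha^{-t}=\beta^t$ for even $t$ to see that $T_{2m-i}$ is the same expression with $\alpha$ replaced by $\beta$. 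So I would apply Lemma~\ref{lem.jyv8ck6} to pair $i$ with $2m-i$, treating the middle term $T_m$ separately.

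The next step is to apply Lemma~\ref{lem.dgcin1i} to $1+(-1)^{ijr}\alpha^{(2m-2i)jr}$ with $p=ijr$, $q=(m-i)jr$. Since $p+q=mjr$, the parities of $p$ and $q$ agree iff $jmr$ is even, which cleanly produces the case split in the statement. In the case $jmr$ even, Lemma~\ref{lem.dgcin1i} yields a factor $(-1)^{ijr}\alpha^{(m-i)jr}L_{(m-i)jr}$ and the analogous $\beta$-expression; raising to the $n$th power and adding gives $L_{(m-i)jr}^n\,L_{(m-i)(jrn+2js)}$ up to a tracked sign $(-1)^{nijr}$. The middle term evaluates to $(-1)^{m(js+1)}\binom{2m}{m}2^n$ because $(-1)^{mjr}=1$, matching the stray $2^n$ in the first branch of~\eqref{eq.ix0nr87}.

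In the case $jmr$ odd, the ``different parity'' branch of Lemma~\ref{lem.dgcin1i} introduces a factor $F_{(m-i)jr}\sqrt{5}$. A minor subtlety is that the $\beta$-version carries an extra sign relative to the $\alpha$-version (read off from \eqref{eq.hu8kfq5}--\eqref{eq.ok9lld2}), so after raising to the $n$th power the symmetric sum produces $\alpha^E+(-1)^n\beta^E$ with $E=(m-i)(jrn+2js)$. This collapses to $L_E$ when $n$ is even and to $F_E\sqrt{5}$ when $n$ is odd, accounting for the two sub-cases and the distinction between $5^{n/2-m}$ and $5^{(n+1)/2-m}$. The middle term $T_m=(-1)^{m(js+1)}\binom{2m}{m}(1-1)^n$ vanishes for $n\ge 1$, consistent with the absence of a middle correction in these branches. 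The identity~\eqref{eq.bu0jaqx} is proved identically from (BL), with every explicit $(-1)^{i(\cdots+1)}$ replaced by $(-1)^{i(\cdots)}$ and an overall absence of $5^{-m}$.

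The main obstacle I anticipate is the sign bookkeeping: normalising the three exponents $(-1)^{i(js+jrn+1)}$, $(-1)^{i(s+1)}$ and $(-1)^{is}$ from the raw expression $(-1)^{i(js+1)+nijr}$ (and its odd-$n$ cousin $(-1)^{i(js+1)+ijr}$) using that, in the odd cases, $j$ and $r$ are necessarily odd, so $(-1)^{ijs}=(-1)^{is}$ and $(-1)^{ijr}=(-1)^i$. Everything else is routine algebra once the pairing and Lemma~\ref{lem.dgcin1i} are in place.
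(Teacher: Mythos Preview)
Your proposal is correct and follows essentially the same architecture as the paper: specialise (BF) at $x=z=1$ with $m\to 2m$, invoke Lemma~\ref{lem.dgcin1i} with $p=ijr$, $q=(m-i)jr$ (so that $p+q=jmr$ controls the case split), and fold the $i=0,\ldots,2m$ sum via Lemma~\ref{lem.jyv8ck6}, isolating the middle term $i=m$.

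The one minor difference is the order of operations and the device used to pass from powers of $\alpha$ to Lucas/Fibonacci numbers. The paper first applies Lemma~\ref{lem.dgcin1i} uniformly to obtain a sum over $i=0,\ldots,2m$ still written in terms of $\alpha^{(m-i)(jrn+2js)}$, then expands each such power as $\tfrac12(L_{(m-i)(jrn+2js)}+F_{(m-i)(jrn+2js)}\sqrt5)$ and appeals to Lemma~\ref{lem.rational} (the left side being rational forces the $\sqrt5$-coefficient to vanish), and only \emph{afterwards} folds with Lemma~\ref{lem.jyv8ck6}. You instead fold first, using the symmetry $T_{2m-i}=T_i\big|_{\alpha\to\beta}$ (valid because the relevant exponents are even), so that the $\alpha$- and $\beta$-terms combine directly into $L$'s or $F\sqrt5$'s without any rationality argument. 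Both routes are standard and lead to the same sign bookkeeping you describe; your observation that $j,r$ (and $m$) are odd in the ``$jmr$ odd'' branch is exactly what is needed to reduce the exponents to the forms $(-1)^{i(s+1)}$ and $(-1)^{is}$ appearing in \eqref{eq.ix0nr87}.
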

\begin{proof}
In (BF) write $2m$ for $m$ and set $x=1$ and $z=1$. This gives
\begin{equation}\label{eq.szp3g8y}
5^m \sum_{k = 0}^n {\binom nkF_{j(rk + s)}^{2m} }  = \sum_{i = 0}^{2m} {( - 1)^{i(js + 1)} \binom {2m}i\alpha ^{(m - i)2js} \left( {1 + ( - 1)^{ijr} \alpha ^{(m - i)2jr} } \right)^n }.
\end{equation}
Now, on account of Lemma~\ref{lem.dgcin1i}, identity~\eqref{eq.z1yfyw6}, we have
\begin{equation}\label{eq.bsdre5p}
1 + ( - 1)^{ijr} \alpha ^{(m - i)2jr}  = \left\{ \begin{array}{l}
 ( - 1)^{ijr} \alpha ^{(m - i)jr} L_{(m - i)jr},\quad\mbox{$jrm$ even};  \\ 
 ( - 1)^i \alpha ^{(m - i)jr} F_{(m - i)jr} \sqrt 5,\quad\mbox{$jrm$ odd}.  \\ 
 \end{array} \right.
\end{equation}
Thus, using~\eqref{eq.bsdre5p} in~\eqref{eq.szp3g8y}, we have
\begin{equation}\label{eq.naslz7k}
5^m \sum_{k = 0}^n {\binom nkF_{j(rk + s)}^{2m} }  = \left\{ \begin{array}{l}
 \sum_{i = 0}^{2m} {( - 1)^{i(js + jrn + 1)} \binom {2m}i\alpha ^{(m - i)(jrn + 2js)} L_{(m - i)jr}^n },\quad\mbox{$jrm$ even};  \\ 
 (\sqrt 5 )^n \sum_{i = 0}^{2m} {( - 1)^{i(js + n + 1)} \binom {2m}i\alpha ^{(m - i)(jrn + 2js)} F_{(m - i)jr}^n },\quad\mbox{$jrm$ odd}.  \\ 
 \end{array} \right.
\end{equation}
Observe that the left side of~\eqref{eq.naslz7k} evaluates to a rational number since it is the finite sum of rational numbers.
Since,
\begin{equation}\label{eq.aoru123}
2\alpha ^{(m - i)(jrn + 2js)}  = L_{^{(m - i)(jrn + 2js)} }  + F_{^{(m - i)(jrn + 2js)} } \sqrt 5, 
\end{equation}
identity~\eqref{eq.ix0nr87} now follows by comparing both sides of identity~\eqref{eq.naslz7k} in each case of $jmr$ even or $jmr$ odd, invoking Lemma~\ref{lem.rational} with $\lambda=\sqrt 5$.
Note the use of Lemma~\ref{lem.jyv8ck6}, identity~\eqref{eq.hccmg0d} to re-write the ($i=0\text{ to }2m$) sum. The proof of identity~\eqref{eq.bu0jaqx} is similar; set $x=1$ and $z=1$ in (BL) and write $2m$ for $m$.
\end{proof}
\begin{theorem}\label{thm.f6sz08q}
Let $m$ and $n$ be non-negative integers and let $j$, $r$ and $s$ be any integers. Then,
\begin{equation}\label{eq.t7h5qpn}
\begin{split}
&\sum_{k = 0}^n {(-1)^k\binom nkF_{j(rk + s)}^{2m} }\\
& = \left\{ \begin{array}{l}
 5^{ - m}(-1)^n \sum_{i = 0}^{m - 1} {( - 1)^{i(s + n + 1)} \binom{2m}iL_{(m - i)jr}^n L_{(m - i)(jrn + 2js)} }  + ( - 1)^{s + 1} \binom{2m}m5^{ - m} 2^n,\quad\mbox{$jmr$ odd};  \\ 
 5^{n/2 - m} \sum_{i = 0}^{m - 1} {( - 1)^{i(js + 1)} \binom{2m}iF_{(m - i)jr}^n L_{(m - i)(jrn + 2js)} },\quad\mbox{$jmr$ even, $n$ even};  \\ 
 -5^{(n + 1)/2 - m} \sum_{i = 0}^{m - 1} {( - 1)^{i(js + jr + 1)} \binom{2m}iF_{(m - i)jr}^n F_{(m - i)(jrn + 2js)} },\quad\mbox{$jmr$ even, $n$ odd};  \\ 
 \end{array} \right.
\end{split}
\end{equation}
\begin{equation}\label{eq.il6nji3}
\begin{split}
&\sum_{k = 0}^n {(-1)^k\binom nkL_{j(rk + s)}^{2m} }\\
& = \left\{ \begin{array}{l}
 (-1)^n \sum_{i = 0}^{m - 1} {( - 1)^{i(s + n)} \binom{2m}iL_{(m - i)jr}^n L_{(m - i)(jrn + 2js)} }  + ( - 1)^{s} \binom{2m}m2^n,\quad\mbox{$jmr$ odd};  \\ 
 5^{n/2} \sum_{i = 0}^{m - 1} {( - 1)^{ijs} \binom{2m}iF_{(m - i)jr}^n L_{(m - i)(jrn + 2js)} },\quad\mbox{$jmr$ even, $n$ even};  \\ 
 -5^{(n + 1)/2} \sum_{i = 0}^{m - 1} {( - 1)^{i(js + jr)} \binom{2m}iF_{(m - i)jr}^n F_{(m - i)(jrn + 2js)} },\quad\mbox{$jmr$ even, $n$ odd}.  \\ 
 \end{array} \right.
\end{split}
\end{equation}
\end{theorem}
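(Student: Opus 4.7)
The plan is to follow the same template as the proof of Theorem~\ref{thm.kqnrcog}, but with $z=-1$ in place of $z=1$, so that identity~\eqref{eq.xcamhlp} of Lemma~\ref{lem.dgcin1i} replaces~\eqref{eq.z1yfyw6}. Concretely, setting $x=1$, $z=-1$ and writing $2m$ for $m$ in (BF) produces
\[
5^m \sum_{k=0}^n (-1)^k \binom{n}{k} F_{j(rk+s)}^{2m} = \sum_{i=0}^{2m} (-1)^{i(js+1)} \binom{2m}{i} \alpha^{(m-i)2js}\bigl(1-(-1)^{ijr}\alpha^{(m-i)2jr}\bigr)^n,
\]
and the analogous relation from (BL) handles~\eqref{eq.il6nji3}.

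Next, I apply~\eqref{eq.xcamhlp} with $p=ijr$, $q=(m-i)jr$. Since $p$ and $q$ have the same parity precisely when $mjr$ is even, the bracketed factor equals $(-1)^{ijr-1}\alpha^{(m-i)jr}L_{(m-i)jr}$ when $jmr$ is odd, and $(-1)^{ijr-1}\alpha^{(m-i)jr}F_{(m-i)jr}\sqrt 5$ when $jmr$ is even. Raising to the $n$-th power, combining the $\alpha^{(m-i)jrn}$ with the existing $\alpha^{(m-i)2js}$ to form $\alpha^{(m-i)(jrn+2js)}$, and then expanding via the Binet-type identity $2\alpha^t=L_t+F_t\sqrt 5$ converts the right-hand side into a combination of a rational part and a $\sqrt 5$-multiple part. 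Since the left-hand side is rational, Lemma~\ref{lem.rational} with $\lambda=\sqrt 5$ retains only the rational part. In the $jmr$-even case the extra factor $(\sqrt 5)^n$ shifts which Binet component is rational, so the further split on the parity of $n$ (giving an $L$-sum when $n$ is even, an $F$-sum when $n$ is odd, with a sign) is forced.

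To collapse the range $0\le i\le 2m$ to $0\le i\le m-1$, I use Lemma~\ref{lem.jyv8ck6}. A short check using $L_{-t}=(-1)^t L_t$ and $F_{-t}=(-1)^{t-1}F_t$ shows that in every case the surviving summand $f(i)$ satisfies $f(2m-i)=f(i)$, so the sum reduces to $f(m)+2\sum_{i=0}^{m-1} f(i)$. The middle term contributes $(-1)^{m(js+n+1)}\binom{2m}{m}L_0 L_0^n$ in the $jmr$-odd case, which after absorbing the overall $(-1)^n$ yields the advertised $(-1)^{s+1}\binom{2m}{m}5^{-m}2^n$ term in~\eqref{eq.t7h5qpn} (using that $j,m,r$ are all odd). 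In the $jmr$-even cases the middle term carries a factor $F_0=0$ and therefore disappears, matching the clean single-sum forms of~\eqref{eq.t7h5qpn} and~\eqref{eq.il6nji3}. Identity~\eqref{eq.il6nji3} is obtained by the same recipe from (BL), the only differences being the $+$ sign on the $2(-1)^{js}$ term in (L2)-style expansions and the absence of the overall $5^{-m}$ factor.

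The main obstacle is purely bookkeeping of sign exponents: one has to simplify expressions like $(-1)^{i(js+1)+(ijr+1)n}$ and $(-1)^{(ijr-1)n}$ modulo $2$ using the case hypotheses ($j$, $m$, $r$ all odd in the $jmr$-odd case; and $jrn\equiv jr\pmod 2$ when $n$ is odd) to arrive at the compact exponents $i(s+n+1)$, $i(js+1)$, $i(js+jr+1)$, and their Lucas counterparts appearing in the theorem. Everything else is a mechanical transcription of the argument already given for Theorem~\ref{thm.kqnrcog}.
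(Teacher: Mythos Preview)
Your proposal is correct and follows essentially the same route as the paper's own proof: set $x=1$, $z=-1$, $m\to 2m$ in (BF)/(BL), apply identity~\eqref{eq.xcamhlp} of Lemma~\ref{lem.dgcin1i} (with $p=ijr$, $q=(m-i)jr$, so that same/different parity corresponds to $jmr$ even/odd), substitute $2\alpha^{t}=L_t+F_t\sqrt5$, invoke Lemma~\ref{lem.rational} with $\lambda=\sqrt5$, and fold the $0\le i\le 2m$ sum via Lemma~\ref{lem.jyv8ck6}. Your extra remarks on verifying $f(2m-i)=f(i)$ and on tracking the sign exponents spell out details the paper leaves implicit; the one loose phrase is the reference to ``(L2)-style expansions,'' where you really just mean the (BL) analogue (no extra $(-1)^i$ and no $5^{-m}$), not the quadratic identity (L2) itself.
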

\begin{proof}
In (BF) write $2m$ for $m$ and set $x=1$ and $z=-1$. This gives
\begin{equation}\label{eq.nk7xftp}
5^m \sum_{k = 0}^n {(-1)^k\binom nkF_{j(rk + s)}^{2m} }  = \sum_{i = 0}^{2m} {( - 1)^{i(js + 1)} \binom {2m}i\alpha ^{(m - i)2js} \left( {1 - ( - 1)^{ijr} \alpha ^{(m - i)2jr} } \right)^n }.
\end{equation}
Now, on account of Lemma~\ref{lem.dgcin1i}, identity~\eqref{eq.xcamhlp}, we have
\begin{equation}\label{eq.o4zh1c8}
1 - ( - 1)^{ijr} \alpha ^{(m - i)2jr}  = \left\{ \begin{array}{l}
 ( - 1)^{ijr - 1} \alpha ^{(m - i)jr} F_{(m - i)jr}\sqrt 5,\quad\mbox{$jrm$ even};  \\ 
 ( - 1)^{i - 1} \alpha ^{(m - i)jr} L_{(m - i)jr},\quad\mbox{$jrm$ odd}.  \\ 
 \end{array} \right.
\end{equation}
Thus, using~\eqref{eq.o4zh1c8} in~\eqref{eq.nk7xftp}, we have
\begin{equation}\label{eq.fwrjf85}
5^m \sum_{k = 0}^n {(-1)^k\binom nkF_{j(rk + s)}^{2m} }  = \left\{ \begin{array}{l}
 \sum_{i = 0}^{2m} {( - 1)^{in+is+i-n} \binom {2m}i\alpha ^{(m - i)(jrn + 2js)} L_{(m - i)jr}^n },\quad\mbox{$jrm$ odd};  \\ 
 (\sqrt 5 )^n \sum_{i = 0}^{2m} {( - 1)^{ijnr+ijs+i-n} \binom {2m}i\alpha ^{(m - i)(jrn + 2js)} F_{(m - i)jr}^n },\quad\mbox{$jrm$ even}.  \\ 
 \end{array} \right.
\end{equation}
The left side of~\eqref{eq.fwrjf85} evaluates to a rational number since it is the finite sum of rational numbers.
Making use of identity~\eqref{eq.aoru123}, identity~\eqref{eq.t7h5qpn} follows by comparing both sides of identity~\eqref{eq.fwrjf85} in each case of $jmr$ even or $jmr$ odd, invoking Lemma~\ref{lem.rational} with $\lambda=\sqrt 5$.
The proof of identity~\eqref{eq.il6nji3} is similar; put $x=1$ and $z=-1$ in (BL) and write $2m$ for $m$.
\end{proof}
The proofs of Theorems~\ref{thm.lsajs74} and~\ref{thm.yew22sz} are similar to those of Theorems~\ref{thm.kqnrcog} and~\ref{thm.f6sz08q}. We therefore omit the details and indicate only the appropriate choices of $x$, $z$, $m$ and $r$ to be made in identities~(BF) and (BL) in each case.
\begin{theorem}\label{thm.lsajs74}
Let $m$ and $n$ be non-negative integers and let $j$, $r$ and $s$ be any integers. Then,
\begin{equation}
\begin{split}
&\sum_{k = 0}^n {\binom nkF_{j(2rk + s)}^{2m + 1} }\\
&  = \left\{ \begin{array}{l}
 5^{ - m} \sum_{i = 0}^m {( - 1)^{i(js + 1)} \binom {2m + 1}iL_{(2m + 1 - 2i)jr}^n F_{(2m + 1 - 2i)(jrn + js)} },\quad\mbox{$jr$ even};  \\ 
 5^{n/2 - m} \sum_{i = 0}^m {( - 1)^{i(js + 1)} \binom {2m + 1}iF_{(2m + 1 - 2i)jr}^n F_{(2m + 1 - 2i)(jrn + js)} },\quad\mbox{$jr$ odd, $n$ even};  \\ 
 5^{(n - 1)/2 - m} \sum_{i = 0}^m {( - 1)^{i(js + 1)} \binom {2m + 1}iF_{(2m + 1 - 2i)jr}^n L_{(2m + 1 - 2i)(jrn + js)} },\quad\mbox{$jr$ odd, $n$ odd};  \\ 
 \end{array} \right.
\end{split}
\end{equation}

\begin{equation}
\begin{split}
&\sum_{k = 0}^n {\binom nkL_{j(2rk + s)}^{2m + 1} }\\
&  = \left\{ \begin{array}{l}
 \sum_{i = 0}^m {( - 1)^{ijs} \binom {2m + 1}iL_{(2m + 1 - 2i)jr}^n L_{(2m + 1 - 2i)(jrn + js)} },\quad\mbox{$jr$ even};  \\ 
 5^{n/2} \sum_{i = 0}^m {( - 1)^{ijs} \binom {2m + 1}iF_{(2m + 1 - 2i)jr}^n L_{(2m + 1 - 2i)(jrn + js)} },\quad\mbox{$jr$ odd, $n$ even};  \\ 
 5^{(n + 1)/2} \sum_{i = 0}^m {( - 1)^{ijs} \binom {2m + 1}iF_{(2m + 1 - 2i)jr}^n F_{(2m + 1 - 2i)(jrn + js)} },\quad\mbox{$jr$ odd, $n$ odd}.  \\ 
 \end{array} \right.
\end{split}
\end{equation}
\end{theorem}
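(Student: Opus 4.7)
The plan is to adapt the argument of Theorem~\ref{thm.kqnrcog} to odd exponents. In (BF) and (BL) I would replace the dummy $m$ by $2m+1$ and the dummy $r$ by $2r$, and set $x=z=1$. Because $2ijr$ is even, the sign $(-1)^{ijr}$ inside the $n$th power drops out and, for the Fibonacci case, one obtains
\[
5^m\sqrt{5}\sum_{k=0}^n\binom{n}{k}F_{j(2rk+s)}^{2m+1}=\sum_{i=0}^{2m+1}(-1)^{i(js+1)}\binom{2m+1}{i}\alpha^{(2m+1-2i)js}\bigl(1+\alpha^{(2m+1-2i)\cdot 2jr}\bigr)^n.
\]
Since $2m+1-2i$ is odd, the integer $q=(2m+1-2i)jr$ has the same parity as $jr$, so Lemma~\ref{lem.dgcin1i}, identity~\eqref{eq.z1yfyw6} (with $p=0$) gives $1+\alpha^{2q}=\alpha^qL_q$ when $jr$ is even and $1+\alpha^{2q}=\alpha^qF_q\sqrt{5}$ when $jr$ is odd. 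After substituting and combining $\alpha^{(2m+1-2i)js}\alpha^{(2m+1-2i)jrn}=\alpha^{(2m+1-2i)(jrn+js)}$, the right-hand side becomes a sum of terms $(-1)^{i(js+1)}\binom{2m+1}{i}\alpha^{(2m+1-2i)(jrn+js)}X_{(2m+1-2i)jr}^n$ with $X\in\{L,F\}$, carrying an overall $(\sqrt 5)^n$ only in the $jr$-odd subcase.

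Next I would collapse the sum by pairing $i$ with $2m+1-i$. The key structural difference from the even-exponent case is that the index range $\{0,\dots,2m+1\}$ has even length and this involution is fixed-point-free, so Lemma~\ref{lem.jyv8ck6} is not invoked and every summand belongs to exactly one pair. Using $\alpha^{-T}=(-1)^T\beta^T$, $L_{-k}=(-1)^kL_k$ and $F_{-k}=(-1)^{k-1}F_k$, one checks that each pair contributes
\[
(-1)^{i(js+1)}\binom{2m+1}{i}X_{(2m+1-2i)jr}^n\bigl(\alpha^{(2m+1-2i)(jrn+js)}\pm(-1)^n\beta^{(2m+1-2i)(jrn+js)}\bigr),
\]
which collapses to either $L_{(2m+1-2i)(jrn+js)}$ or $F_{(2m+1-2i)(jrn+js)}\sqrt{5}$ depending on the sign. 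Dividing through by $5^m\sqrt{5}$ and distributing the surviving $\sqrt 5$ factors then produces the three announced cases, the split according to the parity of $n$ arising only when $jr$ is odd. The Lucas identity is established by the same three-step procedure applied to (BL); the only differences are the absence of the $5^{-m-1/2}$ prefactor and the replacement of $(-1)^{i(js+1)}$ by $(-1)^{ijs}$, which re-shuffles the final $L$'s and $F$'s exactly as in the stated formulas.

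The main obstacle is not conceptual but purely a matter of sign bookkeeping. One must reliably combine the factor $(-1)^{(2m+1-i)(js+1)}$ coming from relabelling $i\mapsto 2m+1-i$, the factor $(-1)^{(2m+1-2i)(jrn+js)}=(-1)^{jrn+js}$ from $\alpha^{-T}\to\beta^T$, the $(-1)^{jrn}$ from $L_{-k}$ (or the trivial sign from $F_{-k}$ when $jr$ is odd), and the global $(-1)^{js}$ or $(-1)^{js+1}$ floating out of $(-1)^{(2m+1)(js+1)}$, so that each paired coefficient reduces cleanly to $(-1)^{i(js+1)}\binom{2m+1}{i}$ (or $(-1)^{ijs}\binom{2m+1}{i}$ in the Lucas case). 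Once this reconciliation is done, Lemma~\ref{lem.rational} is not needed, because the division by $\sqrt 5$ is exactly absorbed by the $\alpha^T-\beta^T$ combination supplied by the involution.
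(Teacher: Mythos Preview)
Your proposal is correct and follows essentially the same approach as the paper: substitute $x=z=1$, $m\to 2m+1$, $r\to 2r$ in (BF)/(BL), simplify $1+\alpha^{(2m+1-2i)2jr}$ via Lemma~\ref{lem.dgcin1i}, and then fold the $(2m+2)$-term sum down to $m+1$ terms. The only organisational difference is at the last step: the paper (by analogy with Theorem~\ref{thm.kqnrcog}) writes $2\alpha^{T}=L_{T}+F_{T}\sqrt5$, invokes Lemma~\ref{lem.rational} to kill the irrational half, and then uses Lemma~\ref{lem.jyv8ck6} to halve the summation range, whereas you perform the pairing $i\leftrightarrow 2m+1-i$ directly to produce $\alpha^{T}\pm\beta^{T}$ in one stroke; both routes are the same computation.
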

\begin{proof}
Set $x=1$, $z=1$ and write $2m + 1 $ for $m$ and $2r$ for $r$ in identities (BF) and (BL).
Note that
\[
1 + \alpha ^{(2m + 1 - 2i)2jr}  = \left\{ \begin{array}{l}
 \alpha ^{(2m + 1 - 2i)jr} L_{(2m + 1 - 2i)jr},\quad\mbox{$jr$ even};  \\ 
 \alpha ^{(2m + 1 - 2i)jr} F_{(2m + 1 - 2i)jr} \sqrt 5,\quad\mbox{$jr$ odd}.  \\ 
 \end{array} \right.
\]
\end{proof}
\begin{theorem}\label{thm.yew22sz}
Let $m$ and $n$ be non-negative integers and let $j$, $r$ and $s$ be any integers. Then,
\begin{equation}
\begin{split}
&\sum_{k = 0}^n {(-1)^k\binom nkF_{j(2rk + s)}^{2m + 1} }\\
&  = \left\{ \begin{array}{l}
 (-1)^n5^{ - m} \sum_{i = 0}^m {( - 1)^{i(js + 1)} \binom {2m + 1}iL_{(2m + 1 - 2i)jr}^n F_{(2m + 1 - 2i)(jrn + js)} },\quad\mbox{$jr$ odd};  \\ 
 5^{n/2 - m} \sum_{i = 0}^m {( - 1)^{i(js + 1)} \binom {2m + 1}iF_{(2m + 1 - 2i)jr}^n F_{(2m + 1 - 2i)(jrn + js)} },\quad\mbox{$jr$ even, $n$ even};  \\ 
 -5^{(n - 1)/2 - m} \sum_{i = 0}^m {( - 1)^{i(js + 1)} \binom {2m + 1}iF_{(2m + 1 - 2i)jr}^n L_{(2m + 1 - 2i)(jrn + js)} },\quad\mbox{$jr$ even, $n$ odd};  \\ 
 \end{array} \right.
\end{split}
\end{equation}

\begin{equation}
\begin{split}
&\sum_{k = 0}^n {(-1)^k\binom nkL_{j(2rk + s)}^{2m + 1} }\\
&  = \left\{ \begin{array}{l}
 (-1)^n\sum_{i = 0}^m {( - 1)^{ijs} \binom {2m + 1}iL_{(2m + 1 - 2i)jr}^n L_{(2m + 1 - 2i)(jrn + js)} },\quad\mbox{$jr$ odd};  \\ 
 5^{n/2} \sum_{i = 0}^m {( - 1)^{ijs} \binom {2m + 1}iF_{(2m + 1 - 2i)jr}^n L_{(2m + 1 - 2i)(jrn + js)} },\quad\mbox{$jr$ even, $n$ even};  \\ 
 -5^{(n + 1)/2} \sum_{i = 0}^m {( - 1)^{ijs} \binom {2m + 1}iF_{(2m + 1 - 2i)jr}^n F_{(2m + 1 - 2i)(jrn + js)} },\quad\mbox{$jr$ even, $n$ odd}.  \\ 
 \end{array} \right.
\end{split}
\end{equation}
\end{theorem}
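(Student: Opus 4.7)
The plan is to mirror the proof of Theorem~\ref{thm.f6sz08q} using the substitution indicated by the author. First I would put $x=1$, $z=-1$ in identities~(BF) and~(BL), writing $2m+1$ for $m$ and $2r$ for $r$. Since $(-1)^{i\cdot 2jr}=1$, the $F$-version yields
\begin{equation*}
5^{m}\sqrt{5}\sum_{k=0}^{n}(-1)^{k}\binom{n}{k}F_{j(2rk+s)}^{2m+1}=\sum_{i=0}^{2m+1}(-1)^{i(js+1)}\binom{2m+1}{i}\alpha^{(2m+1-2i)js}\bigl(1-\alpha^{(2m+1-2i)\cdot 2jr}\bigr)^{n},
\end{equation*}
and an analogous expression (without the $\sqrt{5}$-normalization and with $L$ in place of $F$) for~(BL).

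Next I invoke Lemma~\ref{lem.dgcin1i}, identity~\eqref{eq.xcamhlp}, with $p=0$ and $q=(2m+1-2i)jr$. Since $2m+1-2i$ is always odd, the parity of $q$ equals that of $jr$, so
\begin{equation*}
1-\alpha^{(2m+1-2i)\cdot 2jr}=\begin{cases}-\alpha^{(2m+1-2i)jr}L_{(2m+1-2i)jr}, & jr\text{ odd};\\ -\alpha^{(2m+1-2i)jr}F_{(2m+1-2i)jr}\sqrt{5}, & jr\text{ even}.\end{cases}
\end{equation*}
Substituting this and absorbing the exponents into $\alpha^{(2m+1-2i)(jrn+js)}$ gives a sum over $i=0,\ldots,2m+1$ carrying an overall factor $(-1)^{n}$ (no $\sqrt{5}$) in the $jr$-odd branch and an overall factor $(\sqrt{5})^{n}$ in the $jr$-even branch.

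I would then collapse the sum by pairing $i$ with $2m+1-i$. Using $\alpha^{-q}=(-1)^{q}\beta^{q}$, $L_{-q}=(-1)^{q}L_{q}$, $F_{-q}=(-1)^{q-1}F_{q}$, the binomial symmetry $\binom{2m+1}{i}=\binom{2m+1}{2m+1-i}$, and the reduction $(-1)^{(2m+1-i)(js+1)}=(-1)^{(js+1)(i+1)}$, the paired terms combine into either $\alpha^{Q}+\beta^{Q}=L_{Q}$ or $\alpha^{Q}-\beta^{Q}=F_{Q}\sqrt{5}$, where $Q=(2m+1-2i)(jrn+js)$. An appeal to Lemma~\ref{lem.rational} with $\lambda=\sqrt{5}$ then separates the rational and irrational parts; this forces a split on the parity of $n$ in the $jr$-even branch but requires no such split in the $jr$-odd branch, which explains the three-case structure of the theorem. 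The Lucas identity follows from the same substitutions in~(BL), with the symmetric and antisymmetric combinations swapped.

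The main obstacle will be the sign bookkeeping in the pairing step: four independent parity inputs---the prefactor $(-1)^{i(js+1)}$, the inversion $\alpha^{-q}=(-1)^{q}\beta^{q}$, the choice between $L_{-q}$ and $F_{-q}$, and whether the overall scaling contributes $(-1)^{n}$ or $(\sqrt{5})^{n}$---must be reconciled across the parities of $jr$ and $n$ to match the tabulated signs precisely. A good sanity check is the case $m=0$, where each identity must collapse to the linear identity~\eqref{eq.dg8fc1k} (respectively~\eqref{eq.qnc2vt5}) in the appropriate parity regime, pinning down any ambiguous $(-1)$-factors before the general verification.
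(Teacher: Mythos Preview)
Your proposal is correct and follows essentially the same route as the paper: substitute $x=1$, $z=-1$, $m\mapsto 2m+1$, $r\mapsto 2r$ in (BF)/(BL), simplify $1-\alpha^{(2m+1-2i)2jr}$ via Lemma~\ref{lem.dgcin1i}, and then separate rational from irrational parts by Lemma~\ref{lem.rational}. The only cosmetic difference is that the paper (following the template of Theorems~\ref{thm.kqnrcog}--\ref{thm.f6sz08q}) would expand $\alpha^{(2m+1-2i)(jrn+js)}$ via~\eqref{eq.aoru123} and then fold the $i$-sum using Lemma~\ref{lem.jyv8ck6}, whereas you pair $i\leftrightarrow 2m+1-i$ directly; these are equivalent bookkeeping devices.
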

\begin{proof}
Put $x=1$, $z=-1$ and write $2m + 1 $ for $m$ and $2r$ for $r$ in identities (BF) and (BL).
Note that
\[
1 - \alpha ^{(2m + 1 - 2i)2jr}  = \left\{ \begin{array}{l}
 -\alpha ^{(2m + 1 - 2i)jr} F_{(2m + 1 - 2i)jr}\sqrt 5,\quad\mbox{$jr$ even};  \\ 
 -\alpha ^{(2m + 1 - 2i)jr} L_{(2m + 1 - 2i)jr},\quad\mbox{$jr$ odd}.  \\ 
 \end{array} \right.
\]
\end{proof}

\hrule

\bigskip

\bigskip
\noindent Concerned with sequences: \seqnum{A000032}, \seqnum{A000045}.

\bigskip
\hrule
\bigskip

\vspace*{+.1in}
\noindent



\end{document}